\newtheorem{thm}{Theorem}[section]
\newtheorem{lem}[thm]{Lemma}
\theoremstyle{definition}
 \theoremstyle{remark}
\numberwithin{equation}{section}
\newcommand{\D}{\mathbb D}
\newcommand{\C}{\mathbb C}
\theoremstyle{remark}
\numberwithin{equation}{section}
\numberwithin{theorem}{section}
\date{\today}
\subjclass[2010]{Primary 47B38; Secondary 30H20.} \keywords{Hilbert matrix, weighted composition operator, operator norm, Bergman spaces, Korenblum spaces}
\begin{document} \title[]{Norm estimates of weighted composition operators pertaining to the Hilbert Matrix }

\author[Lindstr{\"o}m] {Mikael Lindstr\"om}\address{Mikael Lindstr{\"o}m. Department of Mathematics, \AA bo Akademi University. FI-20500 \AA bo, Finland. e.mail: mikael.lindstrom@abo.fi}
\author[Miihkinen] {Santeri Miihkinen} \address{Santeri Miihkinen. Department of Mathematics, \AA bo Akademi University. FI-20500 \AA bo, Finland. e.mail: santeri.miihkinen@abo.fi}
\author[Wikman] {Niklas Wikman}\address{Niklas Wikman. Department of Mathematics, \AA bo Akademi University. FI-20500 \AA bo, Finland. e.mail: niklas.wikman@abo.fi}

\begin{abstract} Very recently,  Bo\v{z}in and Karapetrovi{\'c} \cite{BK} solved a conjecture by proving that the norm of the Hilbert matrix operator  $\mathcal{H}$ on the Bergman space $A^p$ is equal to $\frac{\pi}{\sin(\frac{2\pi}{p})}$ for $2 < p < 4.$ In this article we present a partly new and  simplified proof of this result. 
Moreover, we calculate the exact value of the norm of $\mathcal{H}$ defined on the Korenblum spaces $H^\infty_\alpha$ for 
$0 < \alpha \le 2/3$ and an upper bound for the norm on the scale $2/3 < \alpha < 1$.
\end{abstract}

\maketitle

\section{Introduction}

The Hilbert matrix operator $\mathcal{H}$ is one of the central operators in operator theory and the study of its boundedness, norm and other properties on several analytic function spaces has been under active investigation in recent years.  Magnus \cite{Ma} was the first one to study $\mathcal{H}$ as an operator on the space $l^2$ of all square-summable complex sequences. Diamantopoulus and Siskakis \cite{DS} considered $\mathcal{H}$ acting on the Hardy spaces $H^p$. They showed that $\mathcal{H}\colon H^p \to H^p$ is bounded for $1 < p < \infty,$ and obtained an upper bound for its norm, namely $\|\mathcal{H}\|_{H^p \to H^p} \le \frac{\pi}{\sin(\frac{\pi}{p})}$ for $2 \le p < \infty$ and another estimate on the scale $1 < p < 2$. In \cite{D}, the boundedness of $\mathcal{H}$ on the Bergman spaces $A^p, \, 2 < p < \infty,$ was shown and the first upper bound for the norm of $\mathcal{H}$ 
was also obtained, that is, $$\|\mathcal{H}\|_{A^p \to A^p} \le \frac{\pi}{\sin(\frac{2\pi}{p})}$$ for $4 \le p < \infty$ and another less precise upper estimate for $2 < p < 4$. Later, Dostani{\'c}, Jevti{\'c} and Vukoti{\'c} \cite{DJV} proved this to be the exact value of the norm on the scale $4 \le p < \infty$ and improved the upper bound in \cite{D} for the exponents $2 < p < 4$. They also established the precise value of the norm of $\mathcal{H}$ in the Hardy space case for all $1 < p < \infty$. In turn, the boundedness of $\mathcal{H}$ defined on the Korenblum spaces $H^\infty_\alpha$ was observed in \cite{AMS}. The upper estimate for the norm of $\mathcal{H}$ on $A^p, \, 2 < p < 4,$ was conjectured in \cite{DJV} to be the same as the one for the values $4 \le p < \infty.$ Recently,  Bo\v{z}in and Karapetrovi{\'c} \cite{BK} solved the conjecture in the positive and obtained that $$\|\mathcal{H}\|_{A^p \to A^p} = \frac{\pi}{\sin(\frac{2\pi}{p})}$$ also for $2 < p < 4$.  In \cite{BK}, 
the problem of determining the norm of  $\mathcal{H}$ was reduced to estimates concerning the Beta function.  
The proof of one of their key results  
rests heavily on the use of a theorem of Sturm involving the number of zeros of a polynomial; see \cite{P}. 
The purpose of this article is twofold: First, by avoiding the use of Sturm's theorem, we give a partly new and  simplified proof of the main result in \cite{BK}. 
Secondly, we establish the exact value of the norm of $\mathcal{H}$ defined on the Korenblum spaces $H^\infty_\alpha$ for 
$0 < \alpha < 2/3$ and an upper bound for the norm on the scale $2/3 \le \alpha < 1$. Our proof of the Bergman space case rests upon a new estimate for the Beta function due to Bhayo and S\'andor \cite{BS}. We also give a different proof of case $2+\sqrt{3} \le p < 4$ in \cite[Lemma 2.6]{BK} based on a change of variables in the Beta function. Regarding the $H^\infty_\alpha$ case, we employ the representation of $\mathcal{H}$ in terms of weighted composition operators and a formula for the norm of these operators on spaces $H^\infty_\alpha$.

\section{Preliminaries}
  
  Let $H(\D)$ be the algebra of analytic functions in the open unit disc $\D = \{z \in \C: |z| < 1\}$ of the complex plane $\C$. For those $f \in H(\D), \, f(z) = \sum_{k=0}^\infty a_k z^k,$ for which the sequence $$\left(\sum_{k=0}^\infty \frac{a_k}{n+k+1}\right)_{n=0}^\infty$$ converges, the Hilbert matrix operator $\mathcal{H}$ can be 
  defined by its action on the Taylor coefficients  of $f$ as follows   
\begin{equation*}
\mathcal{H}(f)(z) = \sum_{n = 0}^\infty \left(\sum_{k = 0}^\infty \frac{a_k}{n+k+1}\right) z^n.
\end{equation*}
However, it also admits  an integral representation in terms of weighted composition operators $T_t$
\begin{equation}
\label{eq: weighcomp}
\mathcal{H}(f)(z) = \int_0^1 T_t(f)(z) \, dt, \hspace{0,25cm} z \in \mathbb{D},
\end{equation}
where $T_t(f)(z) = \omega_t(z)f(\phi_t(z))$ for $0  < t < 1,$ $\omega_t(z) = \frac{1}{(t-1)z +1}$ and $\phi_t(z) = \frac{t}{(t-1)z +1}$; see \cite{DS}. The representation \eqref{eq: weighcomp} is employed in this article. We consider $\mathcal{H}$ acting on the Bergman spaces $$A^p = \bigg\{f \in H(\D): \|f\|_{A^p} = \bigg( \int_{\D} |f(z)|^p \, dA(z) \bigg)^{1/p} < \infty \bigg\},$$ where $2 < p < \infty$ and $dA(z)$ is the normalized Lebesgue area measure on $\D$, and on the Korenblum  spaces $$H^\infty_\alpha = \bigg\{f \in H(\D): \|f\|_{H^\infty_\alpha} = \sup_{z \in \D}(1-|z|^2)^\alpha |f(z)| < \infty \bigg\},$$ where $0 < \alpha < 1$. 
We also need the Beta function defined as

\begin{equation*}
	B(s,t) = \int_0^1 x^{s-1}(1-x)^{t-1}\, dx,
\end{equation*}
where $s, t \in \C\setminus \mathbb Z$ satisfy $ \Re(t) > 0$ and $\Re(s) > 0$. It can be checked that
\begin{equation*}
	B(s,t ) = \Gamma(s)\Gamma(t)/\Gamma(s+t),
\end{equation*}
where $\Gamma$ is the Gamma function. We will also use the well-known equation
\begin{equation*}
	\Gamma(z)\Gamma(1-z) = \frac{\pi}{\sin(\pi z)}, \quad z \in \C \setminus \mathbb Z.
\end{equation*}
We refer the reader to \cite{AS} for these and other identities regarding the Beta and Gamma functions.

\section{The norm of the Hilbert matrix operator on $A^p$}

In this section, we present a partly shorter proof than the one given in \cite{BK} for the operator norm of $\mathcal{H}$ acting on the Bergman spaces $A^p, \, 2 < p < 4$. In \cite{BK}, the proof of the main result rests upon a new estimate for the Beta function, see \cite[Lemma 2.6]{BK}, and a part of it is in turn proved by utilizing a classical result of Sturm. 
We are able to avoid the result of Sturm and consequently simplify their proof considerably.  
A crucial tool for us is an estimate for the Beta function proved by Bhayo and S\'andor \cite{BS}, which  
we state and prove next for the convenience of the reader. 

\begin{lem}\label{11} Let $x > 1$, $0 < y < 1$. Then 
\begin{equation}
\label{eq: betaeqn}
B(x,y) < \frac{1}{xy}(x + y - xy).
\end{equation}

\end{lem}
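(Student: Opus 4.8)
The plan is to recast the claimed bound in additive form and then recognize both sides as integrals over $(0,1)$, reducing \eqref{eq: betaeqn} to the negativity of a single integral whose integrand factors in an elementary way. First I would observe that the right-hand side splits as
\[
\frac{1}{xy}(x+y-xy) = \frac1x + \frac1y - 1,
\]
and that each of these three terms is itself an integral on $(0,1)$: namely $\frac1x = \int_0^1 t^{x-1}\,dt$, $\frac1y = \int_0^1 (1-t)^{y-1}\,dt$, and $1 = \int_0^1 dt$. Since $B(x,y) = \int_0^1 t^{x-1}(1-t)^{y-1}\,dt$, the inequality \eqref{eq: betaeqn} is then equivalent to
\[
\int_0^1 \left[ t^{x-1}(1-t)^{y-1} - t^{x-1} - (1-t)^{y-1} + 1 \right]\,dt < 0.
\]

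The key step is to notice that the integrand factors, namely
\[
t^{x-1}(1-t)^{y-1} - t^{x-1} - (1-t)^{y-1} + 1 = \left(t^{x-1}-1\right)\left((1-t)^{y-1}-1\right).
\]
After this it remains only to read off the sign of each factor on $(0,1)$. For $0 < t < 1$ and $x > 1$ we have $t^{x-1} < 1$, so the first factor is negative; for $0 < t < 1$ and $0 < y < 1$ the base $1-t$ lies in $(0,1)$ and is raised to the negative power $y-1$, so $(1-t)^{y-1} > 1$ and the second factor is positive. Hence the integrand is strictly negative on all of $(0,1)$, the integral is strictly negative, and this is exactly \eqref{eq: betaeqn}.

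The only point requiring a little care is the preliminary justification that $B(x,y)$ and the three constants may be written as integrals on the common interval $(0,1)$ and combined; this uses the hypotheses $x>1$ and $0<y<1$ to guarantee convergence, since $t^{x-1}$ is bounded near $t=0$ and $(1-t)^{y-1}$ remains integrable near $t=1$ (as $y-1 > -1$). I do not expect any genuine obstacle beyond spotting the factorization itself: once it is in hand, the estimate on the special function collapses to a one-line sign check rather than any delicate analysis of the Beta function.
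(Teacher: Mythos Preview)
Your argument is correct and genuinely different from the paper's. The paper proceeds via the digamma function: it first establishes the auxiliary inequality $\psi(1+x)-\psi(x+y) < \frac{1-y}{x+y-xy}$ by showing that $g_x(y):=\psi(1+x)-\psi(x+y)-\frac{1-y}{x+y-xy}$ is convex in $y$ (using the series for $\psi''$) with $g_x(0)=g_x(1)=0$, and then observes that the logarithmic difference $h_y(x)=\log\Gamma(1+x)+\log\Gamma(1+y)-\log\Gamma(x+y)-\log(x+y-xy)$ has derivative $g_x(y)<0$ and satisfies $h_y(1)=0$. Your route bypasses all of this: writing $\frac{1}{xy}(x+y-xy)=\frac1x+\frac1y-1$ and representing each term as an integral over $(0,1)$ reduces the claim to the sign of $\int_0^1(t^{x-1}-1)((1-t)^{y-1}-1)\,dt$, which is immediate since the two factors have opposite signs on $(0,1)$ precisely under the hypotheses $x>1$, $0<y<1$. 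Your proof is strictly more elementary---no Gamma or digamma machinery, no convexity or monotonicity step---and collapses the lemma to a pointwise factorization; the paper's approach, on the other hand, yields the intermediate digamma inequality \eqref{eq: psieq} as a by-product, which could conceivably be of independent use but is not needed elsewhere in the article.
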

\begin{proof}[Proof.] 
Let us first show that
\begin{equation}
\label{eq: psieq}
		\psi(1+x) - \psi(x+y)  < \frac{1-y}{x+y-xy},
	\end{equation}
	where $\psi(x) = \frac{\Gamma'(x)}{\Gamma(x)}$.
Write 
	\begin{equation*}
		g_x(y) := \psi(1+x) - \psi(x+y) - \frac{1-y}{x+y-xy}.
	\end{equation*}
	For the second derivative of $g_x(y)$ with respect to $y$ notice that
	\begin{equation*}
		\psi''(x+y) = (-1)^3 \sum_{k = 0}^\infty \frac{2!}{{(x+y+k)}^3} < 0,
	\end{equation*}
	see \cite{AS}.
	Thus for $x>1,$ and $0 < y < 1$ we obtain
	\begin{equation*}
		g_x''(y) = \frac{2x-2}{(x(1-y) + y)^3} - \psi''(x+y) > 0.
	\end{equation*}
	It follows now that $g_x$ is convex as a function of $y$. By using the equality $\psi(1+x) - \psi(x) = \frac{1}{x}$ we conclude that
	\begin{equation*}
		g_x(0) = \psi(1+x) - \psi(x) - \frac{1}{x} = 0 = g_x(1).
	\end{equation*}
	By convexity of $g_x$ inequality \eqref{eq: psieq} follows.

Now, we define
	\begin{equation*}
		h_y(x) = \log(\Gamma(1+x)) + \log(\Gamma(1+y)) -\log(\Gamma(x+y)) - \log(x+y-xy).
	\end{equation*}
	Then estimate \eqref{eq: betaeqn} is equivalent to $h_y(x) < 0$.
	By differentiating $h_y$ with respect to $x$ we get
	\begin{equation*}
		h_y'(x) = g_x(y), 
	\end{equation*}
	which is negative due to inequality \eqref{eq: psieq}. Thus $h_y(x)$ is nonincreasing for $x > 1$ and since $h_y(1) = 0$ the result follows.
\end{proof}

Next, we state Lemma 2.5 from \cite{BK} with a significantly simplified proof compared to the one given there. In \cite{BK}, the proof of Lemma 2.5 is divided into three cases depending on the size of the parameter $p.$ Two of the cases utilize a classical result of Sturm  involving the number of zeros of a polynomial; see \cite[Theorem 2.1]{BK}. However, the proof presented here avoids using the result of Sturm  by utilizing Lemma \ref{11} and, moreover, it suffices to consider only two cases, namely $2 < p < 5/2$ and $5/2 \le p < 4$. The proof of the former case is the same as the one in \cite{BK}, but it is presented here for the sake of completeness.

\begin{lem} 
\label{le25}
Let $2 < p < 4$. Then $B\left(\frac{2}{p}, 2(p-2)\right) \leq \frac{1}{(p-2)(4-p)}$.
\end{lem}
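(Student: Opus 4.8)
The plan is to exploit the symmetry $B(s,t)=B(t,s)$ together with Lemma \ref{11}, splitting the range at $p=5/2$. The reason for this split is that for $2<p<4$ one argument, $\frac{2}{p}$, always lies in $(1/2,1)$, while the other, $2(p-2)$, increases from $0$ to $4$ and crosses the threshold $1$ demanded by Lemma \ref{11} exactly at $p=5/2$.

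For the case $5/2\le p<4$ I set $x:=2(p-2)\ge 1$ and $y:=\frac{2}{p}\in(0,1)$ and apply Lemma \ref{11} to $B\left(\frac{2}{p},2(p-2)\right)=B(x,y)$, the endpoint $p=5/2$ being covered since there $x=1$ and $B(1,y)=1/y$ already realizes the bound with equality. A short computation gives $xy=\frac{4(p-2)}{p}$ and $x+y-xy=\frac{2[(p-2)^2+1]}{p}$, hence
\begin{equation*}
B\left(\frac{2}{p},2(p-2)\right)\le \frac{1}{xy}(x+y-xy)=\frac{(p-2)^2+1}{2(p-2)}.
\end{equation*}
It then remains to check $\frac{(p-2)^2+1}{2(p-2)}\le \frac{1}{(p-2)(4-p)}$, which after clearing the positive denominators is equivalent to $(4-p)[(p-2)^2+1]\le 2$. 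Writing $u=p-2>0$ this reads $u(u-1)^2\ge 0$, which is immediate. Thus this case requires no further subdivision.

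For the case $2<p<5/2$ we have $2(p-2)<1$, so Lemma \ref{11} does not apply to $B\left(\frac{2}{p},2(p-2)\right)$ directly, and I expect this to be the main obstacle. The reason is that the inequality is asymptotically \emph{tight}: as $p\to 2^+$ both sides behave like $\frac{1}{2(p-2)}$. Consequently, the obvious remedy of pushing an argument above $1$ via the functional equation $B(s,t)=\frac{s+t}{t}\,B(s,t+1)$ and then invoking Lemma \ref{11} injects an additive error which, after multiplication by the resulting large prefactor, overwhelms the vanishing margin near $p=2$. My plan is therefore to isolate the singular factor \emph{exactly}: from $\Gamma(2(p-2))=\frac{\Gamma(2p-3)}{2(p-2)}$ the desired estimate becomes
\begin{equation*}
(4-p)\,\Gamma\left(\frac{2}{p}\right)\Gamma(2p-3)\le 2\,\Gamma\left(\frac{2}{p}+2p-4\right),
\end{equation*}
a comparison of quantities that are smooth at $p=2$ and equal there.

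I would then establish this last inequality on $(2,5/2)$ by the direct argument of \cite{BK}: passing to logarithms and differentiating reduces the claim to a sign condition expressed through the digamma function $\psi$, which must be controlled carefully precisely because the margin collapses at the left endpoint $p=2$. This delicate endpoint analysis, rather than the elementary algebra of the first case, is where the real difficulty lies, and it is what makes the clean reduction $u(u-1)^2\ge 0$ unavailable for small $p$.
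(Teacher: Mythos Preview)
Your treatment of the range $5/2\le p<4$ is correct and coincides with the paper's: apply Lemma~\ref{11} with $x=2(p-2)>1$, $y=2/p\in(0,1)$, and reduce the resulting algebraic inequality to $(p-2)(p-3)^2\ge 0$ (your $u(u-1)^2\ge 0$).

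The gap is in the range $2<p<5/2$. You flag this as the hard case, sketch a reduction via $\Gamma(2(p-2))=\Gamma(2p-3)/(2(p-2))$ to a smooth comparison of Gamma values, and then defer the actual verification to a digamma argument ``as in \cite{BK}'' without carrying it out. This leaves the proof incomplete, and more importantly it misreads the difficulty: this is the \emph{easy} case, both in the paper and already in \cite{BK}. When $2<p\le 5/2$ both arguments $\frac{2}{p}$ and $2(p-2)$ lie in $(0,1]$, and the elementary bound
\[
B(x,y)\le \frac{1}{xy},\qquad x,y\in(0,1],
\]
gives $B\!\left(\frac{2}{p},2(p-2)\right)\le \dfrac{p}{4(p-2)}$; comparing with $\dfrac{1}{(p-2)(4-p)}$ amounts to $p(4-p)\le 4$, i.e.\ $(p-2)^2\ge 0$. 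Your worry about the ``vanishing margin'' at $p\to 2^+$ is precisely why this crude bound succeeds: since $B(1,t)=1/t$, the inequality $B(x,y)\le 1/(xy)$ is itself asymptotically sharp as $2(p-2)\to 0$, so no slack is lost. The whole point of the paper is that Sturm-type arguments from \cite{BK} are unnecessary; invoking them here would undo that simplification.
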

\begin{proof}[Proof.]
		\textbf{(1)} Case $2 < p < \frac{5}{2}$. By using $B(x,y) \le \frac{1}{xy}, x,y \in (0,1]$ and observing that both parameters in the Beta function belong to the interval $(0,1]$, we have
			\begin{equation*}
				B\left(\frac{2}{p}, 2(p-2)\right) \leq \frac{1}{(p-2)(4-p)}.
			\end{equation*}
		\textbf{(2)} Case $\frac{5}{2} < p < 4$. Now $2(p-2) > 1$ and $\frac{2}{p} < 1$. Hence by Lemma \ref{11}, we have
			\begin{equation*}
				\begin{split}
					B\left(\frac{2}{p}, 2(p-2)\right) &< \frac{1}{\frac{2}{p}\cdot 2(p-2)} \left(\frac{2}{p} + 2(p-2) - \frac{2}{p}\cdot 2(p-2)\right)\\
										&= \frac{1}{2(p-2)}(p^2-4p+5).
				\end{split}
			\end{equation*}
			The claim follows from the following inequality 
			\begin{equation*}
				\frac{1}{2}(p^2-4p+5) \leq \frac{1}{4-p},
			\end{equation*}
			which is equivalent to
			\begin{equation*}
				(p-3)^2(p-2) \geq 0.
			\end{equation*}
\end{proof}

A crucial estimate for the proof of the main result \cite[Theorem 1.1]{BK} is Lemma 2.6 in \cite{BK}; see Lemma \ref{le26} below. Its proof is divided into two parts: $(i) \, 2+\sqrt{3} \le p < 4$ and $(ii) \, 2 < p < 2 + \sqrt{3}$. We provide a new proof for the case $(i)$. The proof of the case $(ii)$ is the same as in \cite{BK} and therefore omitted. It essentially uses Lemma \ref{le25} above and it is briefly commented at the end of the proof.

\begin{lem} 
\label{le26}
Let $2 < p < 4$ and $s \in [0,1]$. Then
	\begin{equation*}
		F_p(s) = \left(\frac{4-p}{2} + \frac{p-2}{2}s^4\right)B\left(\frac{2}{p}, 1-\frac{2}{p}\right) - \int_0^1 \psi_p(t)\max\{s^2,t^2\}^{p-2}dt \leq 0,
	\end{equation*}
	where $\psi_p(t) = t^{\frac{2}{p} -1}(1-t)^{-\frac{2}{p}}$.
\end{lem}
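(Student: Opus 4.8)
The plan is to analyze $F_p$ through its derivative and reduce the inequality to the single identity $F_p(1)=0$. Writing $\beta:=B\!\left(\tfrac{2}{p},1-\tfrac{2}{p}\right)>0$ and splitting the integral at $t=s$ according to which of $s^2,t^2$ attains the maximum, I would first record
\begin{equation*}
F_p(s)=\Bigl(\tfrac{4-p}{2}+\tfrac{p-2}{2}s^4\Bigr)\beta-s^{2(p-2)}\int_0^s\psi_p(t)\,dt-\int_s^1\psi_p(t)\,t^{2(p-2)}\,dt.
\end{equation*}
Differentiating in $s$ by Leibniz's rule, the two boundary contributions at $t=s$ cancel, and after factoring the surviving terms I expect the clean identity
\begin{equation*}
F_p'(s)=2(p-2)\,s^{2p-5}\,G_p(s),\qquad G_p(s):=s^{8-2p}\beta-\int_0^s\psi_p(t)\,dt.
\end{equation*}
A direct computation gives $F_p(1)=0$ (the prefactor equals $1$ and the remaining integral equals $\beta$), so it remains only to control the sign of $G_p$.

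For case $(i)$, $2+\sqrt{3}\le p<4$, the plan is to prove $G_p\ge 0$ on $[0,1]$ via the substitution $t=su$ in the incomplete integral, which yields
\begin{equation*}
\int_0^s\psi_p(t)\,dt=s^{2/p}\int_0^1 u^{\frac{2}{p}-1}(1-su)^{-\frac{2}{p}}\,du,
\end{equation*}
and hence $G_p(s)=s^{2/p}\bigl[\,s^{\,8-2p-2/p}\beta-\int_0^1 u^{\frac{2}{p}-1}(1-su)^{-\frac{2}{p}}\,du\,\bigr]$. The bracket is nonnegative for two independent reasons: the algebraic inequality $8-2p-\tfrac{2}{p}\le 0$ is exactly equivalent to $p\ge 2+\sqrt{3}$, so $s^{\,8-2p-2/p}\ge 1$ for $s\in(0,1]$; and since $1-su\ge 1-u$ forces $(1-su)^{-2/p}\le(1-u)^{-2/p}$, the integral is dominated by $\int_0^1 u^{2/p-1}(1-u)^{-2/p}\,du=\beta$. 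Thus $G_p\ge0$, so $F_p'\ge0$ on $(0,1)$, $F_p$ is nondecreasing, and therefore $F_p(s)\le F_p(1)=0$.

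For case $(ii)$, $2<p<2+\sqrt{3}$, the same substitution shows $G_p$ is negative near $0$, so $F_p$ is no longer monotone; here one argues that $G_p$ changes sign only once (from $-$ to $+$), whence $F_p$ first decreases and then increases and attains its maximum over $[0,1]$ at an endpoint. Since $F_p(1)=0$, the inequality reduces to $F_p(0)\le 0$, which after rewriting the Beta functions through $\Gamma$ is precisely $B\!\left(\tfrac{2}{p},2(p-2)\right)\le\frac{1}{(p-2)(4-p)}$, i.e. Lemma \ref{le25}; as in \cite{BK}, I would only comment on this case. The main obstacle is the passage from $F_p'$ to a decidable sign: the real work lies in spotting the factorization of $F_p'$ and the substitution $t=su$, after which the threshold $p=2+\sqrt{3}$ emerges automatically from the exponent condition and the remaining estimates become elementary monotonicity statements, replacing the more delicate analysis (via Sturm's theorem) underlying the treatment in \cite{BK}.
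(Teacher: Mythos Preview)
Your proposal is correct and follows essentially the same strategy as the paper: split the integral at $t=s$, differentiate to obtain $F_p'(s)=2(p-2)s^{2p-5}G_p(s)$, establish $G_p\ge 0$ in case $(i)$ via the substitution $t=su$, and defer case $(ii)$ to \cite{BK} through the endpoint estimate $F_p(0)\le 0$, which is Lemma~\ref{le25}. The only real difference is in the execution of case $(i)$. The paper applies the substitution to $\beta$ itself, writing $\beta=\int_0^s t^{2/p-1}(s-t)^{-2/p}\,dt$, and then analyses the pointwise sign of the integrand difference $H_{p,s}(t)=s^{8-2p}(s-t)^{-2/p}-(1-t)^{-2/p}$ by explicitly solving $H_{p,s}(t)=0$ and checking that the root $t_0=(s^{(4-p)p}-s)/(s^{(4-p)p}-1)$ lies outside $[0,s)$ exactly when $p^2-4p+1>0$. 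You apply the same substitution to $\int_0^s\psi_p$ instead and then bound the two resulting pieces separately via $s^{\,8-2p-2/p}\ge 1$ and $(1-su)^{-2/p}\le(1-u)^{-2/p}$. Both arguments produce the identical threshold $p=2+\sqrt{3}$ (your exponent condition $8-2p-2/p\le 0$ is the same quadratic $p^2-4p+1\ge 0$), but yours bypasses the explicit root computation and is a shade more transparent.
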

\begin{proof}[Proof.]
	First notice that
		\begin{equation*}
			F_p(1) = \left(\frac{4-p}{2} + \frac{p-2}{2}\right)B\left(\frac{2}{p}, 1 - \frac{2}{p}\right) - \int_0^1\psi_p(t)dt = 0.
		\end{equation*}
	The function $F_p(s)$ can be written as
		\begin{equation*}
			F_p(s) = \left(\frac{4-p}{2} + \frac{p-2}{2}s^4\right)B\left(\frac{2}{p}, 1-\frac{2}{p}\right) -s^{2(p-2)}\int_0^s\psi_p(t)dt - \int_s^1\psi_p(t)t^{2(p-2)}dt.
		\end{equation*}
	By differentiating $F_p$ we get
		\begin{equation*}
			\begin{split}
				F_p'(s) &= 2(p-2)s^3B\left(\frac{2}{p}, 1 - \frac{2}{p}\right) -2(p-2)s^{2p-5}\int_0^s\psi_p(t)dt -s^{2(p-2)}\psi_p(s) + s^{2(p-2)}\psi_p(s)\\
					&= 2(p-2)s^{2p-5}\left(B\left(\frac{2}{p}, 1 - \frac{2}{p}\right)s^{8-2p} -\int_0^s \psi_p(t)dt\right).
			\end{split}
		\end{equation*}
	Now a change of variable yields
		\begin{equation*}
			B\left(\frac{2}{p}, 1 - \frac{2}{p}\right) = \int_0^1 t^{\frac{2}{p} -1}(1-t)^{-\frac{2}{p}}dt = \int_0^s t^{\frac{2}{p} -1}(s-t)^{-\frac{2}{p}}dt.
		\end{equation*}
	Consequently, it follows that
		\begin{equation*}
			\begin{split}
				F_p'(s) &= 2(p-2)s^{2p-5}\left[s^{8-2p}\int_0^s t^{\frac{2}{p} -1}(s-t)^{-\frac{2}{p}}dt - \int_0^s t^{\frac{2}{p} -1}(1-t)^{-\frac{2}{p}}dt\right]\\
					&= 2(p-2)s^{2p-5}\left[\int_0^s t^{\frac{2}{p} -1}\left(s^{8-2p}(s-t)^{-\frac{2}{p}} - (1-t)^{-\frac{2}{p}}\right)dt\right].
			\end{split}
		\end{equation*}
	For fixed $p \in (2,4)$ and $s \in [0,1)$ define the function $H_{p, s}(t) = s^{8-2p}(s-t)^{-\frac{2}{p}} - (1-t)^{-\frac{2}{p}}$ for $t \in [0,s)$.

		\textbf{(1)} Case $2+\sqrt{3} \leq p < 4$.
			Solving $H_{p,s}(t) = 0$ we get
				\begin{equation*}
					s^{8-2p}(s-t)^{-\frac{2}{p}} = (1-t)^{-\frac{2}{p}} \iff s^{(4-p)p}(1-t) = (s-t).			
				\end{equation*}
			By denoting $x = (4-p)p$ and solving the equation above with respect to $t$ we have $t = \frac{s^x-s}{s^x-1}$. Since $s^x -1 < 0$ for all $s \in [0,1)$ solutions to $H_{p,s}(t) = 0$ exist only if there is some $t_0$ such that
				\begin{equation*}
					\begin{split}
						t_0 = \frac{s^x -s}{s^x-1} \geq 0 &\iff s^x - s \leq 0\\
											&\iff p^2-4p+1 \leq 0\\
											&\iff 2 - \sqrt{3} \leq p \leq 2 + \sqrt{3}.
					\end{split}
				\end{equation*}
			It also holds that $t_0 \leq s$ as
				\begin{equation*}
					t_0 \leq s \iff \frac{s^x-s}{s^x-1} \leq s \iff s < 1.
				\end{equation*}
			Because $ 2 + \sqrt{3} < p < 4$, it follows that $H_{p,s}(t) = 0$ has no solution in $[0,s)$. Moreover, $\lim_{t \to s} H_{p,s}(t) = +\infty$ and hence, $F_p'(s) \geq 0$. Thus $F_p(s)$ is nondecreasing in $[0,1]$ and because $F_p(1) = 0$ the statement holds.
		
		\textbf{(2)} Case $2 < p < 2 + \sqrt{3}$. The proof of this case essentially uses the estimate $F_p(0) \le 0,$ which follows from Lemma \ref{le25}. See the proof of \cite[Lemma 2.6]{BK} for details. 
\end{proof}

The main result of \cite{BK} is the following.
\begin{thm}[Theorem 1.1 in \cite{BK}]
\label{thm11}
Let $2 < p < 4$. Then the norm of the Hilbert matrix operator $\mathcal{H}$ acting on $A^p$ is $$\|\mathcal{H}\|_{A^p \to A^p} = \frac{\pi}{\sin(2\pi/p)}.$$
\end{thm}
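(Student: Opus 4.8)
The plan is to prove two matching bounds for $\|\mathcal H\|_{A^p\to A^p}$, both equal to $B\!\left(\tfrac 2p,1-\tfrac 2p\right)$, which equals $\frac{\pi}{\sin(2\pi/p)}$ by the reflection formula $\Gamma(z)\Gamma(1-z)=\pi/\sin(\pi z)$ at $z=2/p$ (note $2/p\in(\tfrac12,1)$ for $2<p<4$, so both arguments of $B$ are positive). The substantial half is the upper bound, for which Lemma \ref{le26} is the decisive input; the lower bound comes from boundary-concentrating test functions.

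For the lower bound I would take $f_b(z)=(1-z)^{-b}\in A^p$ with $b<2/p$ and let $b\to 2/p$ from below. Using $T_t(f_b)(z)=\omega_t(z)f_b(\phi_t(z))$ together with the identity $1-\phi_t(z)=(1-t)(1-z)\omega_t(z)$, one computes $T_t(f_b)(z)=(1-t)^{-b}(1-z)^{-b}\omega_t(z)^{1-b}$, whence, by \eqref{eq: weighcomp} and the substitution $r=1-t$, $\mathcal H(f_b)(z)=(1-z)^{-b}\int_0^1 r^{-b}(1-rz)^{b-1}\,dr$. As $z\to 1$ the integral tends to $B(1-b,b)=\pi/\sin(\pi b)$, so $f_b$ is an asymptotic eigenfunction at the boundary point $1$. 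Since the $A^p$ norms of $f_b$ and $\mathcal H f_b$ are governed by the singularity at $z=1$ as $b\to 2/p$ (where $f_b$ leaves $A^p$), the quotient $\|\mathcal H f_b\|_{A^p}/\|f_b\|_{A^p}$ tends to $\pi/\sin(2\pi/p)$, giving $\|\mathcal H\|_{A^p\to A^p}\ge\pi/\sin(2\pi/p)$.

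For the upper bound I would start from $\mathcal H=\int_0^1 T_t\,dt$ and Minkowski's integral inequality, $\|\mathcal H f\|_{A^p}\le\int_0^1\|T_t f\|_{A^p}\,dt$. Changing variables $w=\phi_t(z)$ in a single slice, and using $\phi_t=t\,\omega_t$ and $\phi_t'=t(1-t)\,\omega_t^2$, turns $\|T_t f\|_{A^p}^p$ into $\frac{t^{2-p}}{(1-t)^2}\int_{\phi_t(\D)}|f(w)|^p|w|^{p-4}\,dA(w)$. For $p\ge 4$ the weight $|w|^{p-4}$ is largest on $\partial\D$, the extremal $f$ concentrates at the fixed point $1$, and one finds $\|T_t\|_{A^p\to A^p}=\psi_p(t)$, so that $\int_0^1\|T_t\|\,dt=\int_0^1\psi_p(t)\,dt=\pi/\sin(2\pi/p)$ and the crude bound is already sharp. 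For $2<p<4$, however, $|w|^{p-4}$ blows up toward the interior of $\phi_t(\D)$, the extremal slice concentrates away from the boundary, $\|T_t\|_{A^p\to A^p}>\psi_p(t)$, and the triangle-inequality bound strictly overestimates $\|\mathcal H\|$. The refined argument of \cite{BK} instead retains $f$ and, exploiting the incoherence of the interior contributions, reduces the sharp inequality $\|\mathcal H f\|_{A^p}\le B\!\left(\tfrac 2p,1-\tfrac 2p\right)\|f\|_{A^p}$ to a single scalar inequality in a radial parameter $s\in[0,1]$. That inequality is precisely $F_p(s)\le 0$ of Lemma \ref{le26}: heuristically $s$ is the output radius and $t$ the representation parameter, the factor $\tfrac{4-p}{2}+\tfrac{p-2}{2}s^4$ carries the $A^p$ size of the reduced profile, $\psi_p$ carries the transported measure, and $\max\{s^2,t^2\}^{p-2}$ compares the two scales; the value $s=1$, where $F_p(1)=0$, corresponds to the boundary extremal found in the lower bound.

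The main obstacle is this reduction to the scalar inequality, i.e.\ showing that the worst case of the $A^p$ estimate is faithfully captured by the one-parameter profile encoded in $F_p$; this structural step I would carry out as in \cite{BK}. Granting it, everything rests on $F_p(s)\le 0$, and here the present approach pays off: for $2+\sqrt3\le p<4$ the inequality follows from the sign analysis of $H_{p,s}$ given above, while for $2<p<2+\sqrt3$ it reduces to $F_p(0)\le 0$, hence to Lemma \ref{le25} and ultimately to the Beta-function estimate of Lemma \ref{11}, with Sturm's theorem nowhere needed. Combining the two bounds gives $\|\mathcal H\|_{A^p\to A^p}=\pi/\sin(2\pi/p)$.
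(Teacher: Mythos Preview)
Your proposal is correct and follows essentially the same route as the paper's sketch: the lower bound via the boundary test functions $f_b(z)=(1-z)^{-b}$ with $b\uparrow 2/p$, and the upper bound by reducing (as in \cite{BK}) to the scalar inequality $F_p(s)\le 0$ of Lemma~\ref{le26}, whose proof in turn rests on Lemma~\ref{le25} and hence on Lemma~\ref{11}. The only notable difference is that the paper names the specific mechanism of the reduction---monotonicity of integral means---whereas your heuristic description of the roles of $s$, $t$, $\psi_p$ and the factor $\tfrac{4-p}{2}+\tfrac{p-2}{2}s^4$ is vaguer; since you explicitly defer that structural step to \cite{BK}, this does not affect correctness.
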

The starting point of its proof is the representation of the Hilbert matrix operator in terms of weighted composition operators; see \eqref{eq: weighcomp}. The proof relies on a method based on Lemma \ref{le26} and using monotonicity of integral means in a new way yielding the upper estimate $$\|\mathcal{H}\|_{A^p \to A^p} \le \frac{\pi}{\sin(2\pi/p)},$$ which in combination with the same lower estimate 
implies Theorem \ref{thm11}. 
We refer the reader to \cite{BK} for the complete proof.  

\section{The norm of the Hilbert matrix operator on $H^\infty_{\alpha}$}

In this section, we derive norm estimates for the Hilbert matrix operator acting on $H^\infty_\alpha.$ Since $\mathcal{H}$ is not bounded on $H^\infty_\alpha$ for $\alpha = 0$ and $\alpha  \ge 1,$ we focus on the scale $0 < \alpha <1.$

\noindent Let $0 < \alpha < 1$ and $z \in \mathbb{D}$. Define
\begin{equation*}
	f_\alpha(z) = \frac{1}{(1-z)^\alpha}.
\end{equation*}
On one hand, we have the estimate 
$$\lVert f_\alpha \rVert_{H_{\alpha}^\infty} = \sup_{z \in \mathbb{D}} \frac{(1 - |z|^2)^\alpha}{|1-z|^\alpha} \leq \sup_{z \in \mathbb{D}} (1+|z|)^\alpha = 2^\alpha.$$ On the other hand, for $r \in (0,1)$ it holds that $\lim_{r \to 1-}|f_\alpha(r)| (1-r^2)^\alpha= 2^\alpha$ and we obtain $\lVert f_\alpha \rVert_{H_{\alpha}^\infty} = 2^\alpha.$ 

The weighted composition operator $T_t$ applied to a function $f_\alpha$ can be written as
\begin{equation*}
	T_t(f_\alpha)(z) = \frac{{((t-1)z +1)}^{\alpha -1}}{(1-t)^\alpha}f_{\alpha}(z), \hspace{0,25cm} z \in \mathbb{D},
\end{equation*}
and furthermore
\begin{equation*}
	\mathcal{H}(f_\alpha)(z) = \left(\int_0^1 \frac{((t-1)z +1)^{\alpha - 1}}{(1-t)^\alpha}dt\right)f_\alpha(z).
\end{equation*}
By a change of variables $t = 1 - s$ we can write $H(f_\alpha)(z) = \psi_\alpha(z) f_\alpha(z)$, where
\begin{equation*}
	\psi_\alpha(z) = \int_0^1 \frac{(1-sz)^{\alpha -1}}{s^\alpha}ds.
\end{equation*}
Notice that $\psi_\alpha(z)$ is defined and continuous on $\overline{\mathbb{D}}$ for $0 < \alpha < 1$. 
Thus we have for all $z \in \mathbb{D}$ and $0 < \alpha < 1$ that
$$|\psi_\alpha(z)| \leq  \int_0^1 \frac{(1-s)^{\alpha -1}}{s^\alpha}ds = \frac{\pi}{\sin(\alpha \pi)}.$$
We will also need the following representation of the norm of a weighted composition operator $uC_\varphi\colon H^\infty_\alpha \to H^\infty_\alpha:$ 
\begin{equation}
\label{eq: weightednorm}
\|u C_\varphi\|_{H^\infty_\alpha \to H^\infty_\alpha} = \sup_{z \in \D}\bigg(|u(z)|\bigg(\frac{1-|z|^2}{1-|\varphi(z)|^2}\bigg)^\alpha\bigg),
\end{equation}
where $u \in H^\infty$ and $\varphi$ is an analytic self-map of the unit disc; see \cite{CH}. 
We are now ready to give the lower bound for $\mathcal{H}$ in the $H_{\alpha}^\infty$ case.
\begin{thm}
\label{lowerbdd}
	If $\mathcal{H}: H_{\alpha}^\infty \to H_{\alpha}^\infty$, then 
	\begin{equation*}
		\lVert \mathcal{H} \rVert_{H_\alpha^\infty \to H_\alpha^\infty} \geq \frac{\pi}{\sin(\pi \alpha)}
	\end{equation*}
	holds for $0 < \alpha < 1$.
\end{thm}
\begin{proof}
	We will use the normalized test function $g_\alpha(z) = \frac{f_\alpha(z)}{\lVert f_\alpha \rVert_{H^\infty_\alpha}}$. We have
	\begin{equation*}
		\begin{split}
			\lVert \mathcal{H} \rVert_{H_\alpha^\infty \to H_\alpha^\infty} &\geq \lVert \mathcal{H}(g_\alpha) \rVert_{H_{\alpha}^\infty} = \lVert \psi_\alpha g_\alpha \lVert_{H_{\alpha}^\infty}
													= \sup_{z \in \mathbb{D}} |\psi_\alpha(z)||g_\alpha(z)|(1 - |z|^2)^\alpha\\
													&\geq \sup_{0 \leq r \leq 1} |\psi_\alpha(r)||g_\alpha(r)|(1- r^2)^\alpha
													= \sup_{0 \leq r \leq 1} \frac{|\psi_\alpha(r)|(1 + r)^\alpha}{\lVert f_\alpha \rVert_{H^\infty_\alpha}}
													= \frac{2^\alpha|\psi_\alpha(1)|}{\lVert f_\alpha \rVert_{H^\infty_\alpha}} \\
													&= |\psi_\alpha(1)| = \frac{\pi}{\sin(\pi \alpha)},
		\end{split}
	\end{equation*}
	 where we have used the equality $\lVert f_\alpha \rVert_{H^\infty_\alpha} = 2^\alpha$.
\end{proof}

Before proceeding to the proof of the upper bound for $\|\mathcal{H}\|_{H^\infty_\alpha \to H^\infty_\alpha}$ (Theorem \ref{upperbdd}), we establish the following lemma.

\begin{lem}
\label{normT_t}
Let $0 < \alpha < 1$. Then

$$\|T_t\|_{H^\infty_\alpha \to H^\infty_\alpha} = \begin{cases}
\frac{t^{\alpha-1}}{(1-t)^\alpha} \textup{ if } 0 < \alpha \le 2/3 \textup{ and } 0 < t < 1 \textup{ or if } 2/3 < \alpha < 1 \textup{ and } \frac{3\alpha - 2}{4\alpha - 2} \le t < 1 \\
(1-x_0)^{2\alpha-1}\bigg(\frac{1-|\frac{x_0}{1-t}|^2}{(1-x_0)^2-t^2}\bigg)^\alpha 
\textup{ if } 2/3 < \alpha < 1 \textup{ and } 0 < t < \frac{3\alpha - 2}{4\alpha - 2},
\end{cases}
$$ 
where 
$$x_0 = \frac{\alpha + 2\alpha t-t-\sqrt{4\alpha^2t-2\alpha t+\alpha^2-2\alpha+1}}{2\alpha-1}.$$
\end{lem}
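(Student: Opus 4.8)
The plan is to start from the norm formula \eqref{eq: weightednorm} applied to $T_t=\omega_t C_{\phi_t}$. Writing $w=(t-1)z+1$ and using $\phi_t(z)=t\,\omega_t(z)$, one has $|\omega_t(z)|=1/|w|$ and $1-|\phi_t(z)|^2=(|w|^2-t^2)/|w|^2$, so \eqref{eq: weightednorm} becomes $\|T_t\|=\sup_{z\in\D}G(z)$ with $G(z)=|w|^{2\alpha-1}\big((1-|z|^2)/(|w|^2-t^2)\big)^{\alpha}$. The first key step is to reduce this two–dimensional supremum to the real segment. Fixing $|z|=r$, the quantity $|w|^2=1+2(t-1)\Re z+(t-1)^2r^2$ depends only on $\Re z\in[-r,r]$ and, since $t-1<0$, is minimized at $\Re z=r$; moreover $G=(1-r^2)^\alpha\,g(|w|^2)$ where $g(m)=m^{\alpha-1/2}(m-t^2)^{-\alpha}$. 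A one–line computation gives $(\log g)'(m)=(\alpha-\tfrac12)/m-\alpha/(m-t^2)$, whose only zero is $m=(1-2\alpha)t^2<t^2$, so $g$ is strictly decreasing on $(t^2,\infty)$. Hence on each circle $|z|=r$ the maximum of $G$ occurs at $z=r$, and $\|T_t\|=\sup_{0\le r<1}G(r)$.

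The second step is the one–variable analysis of $G(r)$ for real $r$, where $w=1-(1-t)r>t>0$. Differentiating $\log G$ and factoring $w^2-t^2=(1-t)(1-r)\big((1+t)-(1-t)r\big)$, the two terms carrying the factor $(1-r)^{-1}$ combine, and their numerator collapses to $1-r$, so the singularity at $r=1$ cancels and one is left with
\[
\frac{d}{dr}\log G(r)=\frac{2\alpha}{(1+r)\big((1+t)-(1-t)r\big)}-\frac{(2\alpha-1)(1-t)}{1-(1-t)r}.
\]
Both denominators are positive on $[0,1)$, so the sign of $G'$ is governed by comparing the quotient $R(r)=D_1(r)/D_2(r)$, where $D_1=(1+r)\big((1+t)-(1-t)r\big)$ and $D_2=1-(1-t)r$, with the constant $2\alpha/\big((2\alpha-1)(1-t)\big)$ (when $\alpha>\tfrac12$).

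For $0<\alpha\le\tfrac12$ the second term above is nonpositive while the first is positive, so $G$ is strictly increasing and $\sup_rG(r)=\lim_{r\to1}G(r)=t^{\alpha-1}/(1-t)^{\alpha}$. For $\tfrac12<\alpha<1$ I will show $R$ increases strictly from $R(0)=1+t$ to $R(1)=4$ (a routine sign check of its derivative), so $G'$ changes sign at most once, from $+$ to $-$. Comparing $R(1)=4$ with the threshold constant, or equivalently computing the boundary limit $G'(1^-)$ whose sign is that of $2-3\alpha+(4\alpha-2)t$, yields in both ways the critical value $t=\frac{3\alpha-2}{4\alpha-2}$. Thus for $\alpha\le2/3$ (where this threshold is $\le 0$) or for $\alpha\in(2/3,1)$ with $t\ge\frac{3\alpha-2}{4\alpha-2}$ one has $G'>0$ on $[0,1)$ and again $\|T_t\|=t^{\alpha-1}/(1-t)^{\alpha}$; while for $\alpha\in(2/3,1)$ with $t<\frac{3\alpha-2}{4\alpha-2}$, $G$ increases then decreases, so the maximum is at the unique interior critical point $r_0$, the relevant root of the quadratic $2\alpha[1-(1-t)r]=(2\alpha-1)(1-t)(1+r)\big((1+t)-(1-t)r\big)$. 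Setting $x_0=(1-t)r_0=1-w(r_0)$ converts $G(r_0)$ into the stated form $(1-x_0)^{2\alpha-1}\big((1-|x_0/(1-t)|^2)/((1-x_0)^2-t^2)\big)^\alpha$, and selecting the root lying in $(0,1)$ produces the displayed $x_0$ with the minus sign before the square root.

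The main obstacle is the one–variable step: driving the logarithmic derivative into the cancelled form and then controlling its sign changes. The reduction to the real axis and the monotonicity of $g$ are quick; the delicate part is verifying that $R$ is genuinely monotone, so that there is at most one interior critical point, and checking that the threshold coming from the boundary limit at $r=1$ agrees with the one coming from $R(1)=4$. That coincidence is exactly what makes the two regimes match up along $t=\frac{3\alpha-2}{4\alpha-2}$ and yields the piecewise formula.
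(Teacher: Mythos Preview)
Your plan is correct and, in several respects, cleaner than the paper's proof. The paper proceeds differently in two places. First, for $0<\alpha\le\tfrac12$ it bypasses the norm formula entirely and obtains the upper bound $t^{\alpha-1}/(1-t)^\alpha$ via the Schwarz--Pick lemma applied to $\phi_t$; you instead handle all $\alpha\in(0,1)$ uniformly through \eqref{eq: weightednorm}, which has the advantage of yielding equality directly in every case. Second, for $\alpha>\tfrac12$ the paper reduces the two--dimensional supremum by a rotation about the point $1$, namely $R(z)=1-|z-1|$, landing on a real variable $x\in(t-1,1-t]$; your reduction fixes $|z|=r$ and uses the monotonicity of $g(m)=m^{\alpha-1/2}(m-t^2)^{-\alpha}$ on $(t^2,\infty)$ to see that the circlewise maximum occurs at $z=r$. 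Both reductions lead to the same one--variable problem (your $r$ and the paper's $x$ are related by $x=(1-t)r$), and both eventually hit the same quadratic $p(x)=(1-2\alpha)x^2+(4\alpha t-2t+2\alpha)x+(1-2\alpha)t^2-1$. The paper locates the threshold $t=\tfrac{3\alpha-2}{4\alpha-2}$ by asking when the admissible root $x_0$ of $p$ lies in $(t-1,1-t]$; you obtain it by comparing the monotone quotient $R(r)$ with the constant $2\alpha/((2\alpha-1)(1-t))$ at the endpoint $r=1$. Your monotonicity argument for $R(r)$ gives a more transparent reason why $G'$ changes sign at most once, whereas the paper simply reads this off from $p$ being quadratic. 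One small correction: the relevant root $x_0$ should be selected in $(0,1-t)$ rather than $(0,1)$, since $x_0=(1-t)r_0$ with $r_0\in(0,1)$.
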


\begin{proof}

Assume first that $0 < \alpha \leq 1/2$ and $f \in H^\infty_\alpha$. Then the estimate $|\phi_t'(z)| \le \frac{1-t}{t}$  and the Schwarz-Pick lemma yield 
	\begin{equation*}
		\begin{split}
			\lVert T_t(f) \rVert_{H_{\alpha}^\infty} &= \sup_{z \in \mathbb{D}} |T_t(f)(z)|(1-|z|^2)^\alpha \\
									&= \sup_{z \in \mathbb{D}} \frac{1}{\sqrt{t(1-t)}} |\phi_t'(z)|^{1/2}|f(\phi_t(z))|(1-|z|^2)^{\alpha}\\
									&=  \sup_{z \in \mathbb{D}} \frac{1}{\sqrt{t(1-t)}} |\phi_t'(z)|^{1/2-\alpha} |\phi_t'(z)|^{\alpha}|f(\phi_t(z))|(1-|z|^2)^{\alpha}\\
									&\leq \frac{1}{\sqrt{t(1-t)}} \left( \frac{1-t}{t}\right)^{1/2- \alpha} \sup_{z \in \mathbb{D}} |f(\phi_t(z))|(1-|z|^2)^{\alpha} |\phi_t'(z)|^{\alpha}\\
									&\le \frac{t^{\alpha-1}}{(1-t)^\alpha} \sup_{z \in \mathbb{D}} |f(\phi_t(z))|(1-|\phi_t(z)|^2)^{\alpha} \le  \frac{t^{\alpha-1}}{(1-t)^\alpha} \lVert f \rVert_{H_{\alpha}^\infty}.
		\end{split}
	\end{equation*}
	
Let $1/2 < \alpha < 1.$ 
We apply equation \eqref{eq: weightednorm} 
to $T_t$, in which case we have $$u(z) =\omega_t(z)= \frac{1}{(t-1)z+1} =  \frac{1}{\sqrt{t(1-t)}} (\phi_t'(z))^{1/2}$$ and $\varphi = \phi_t$ and thus 
we get $$\|T_t\|_{H^\infty_\alpha \to H^\infty_\alpha} = \sup_{z \in \D}|1-(1-t)z|^{2\alpha-1}\bigg(\frac{1-|z|^2}{|1-(1-t)z|^2-t^2}\bigg)^\alpha.$$ Define $$F(z) = |1-(1-t)z|^{2\alpha-1}\bigg(\frac{1-|z|^2}{|1-(1-t)z|^2-t^2}\bigg)^\alpha$$ and a projection $$R(z) = 1 - |z-1|$$ that rotates a point $z \in \D$ around the point 1 placing it on the real axis. We have that $|R(z)| \le |z|$ and $|1- R(z)| = |1-z|$ for each $z \in \D$, so it follows that $$F(z) \le |1-R((1-t)z)|^{2\alpha-1}\bigg(\frac{1-|\frac{R((1-t)z)}{1-t}|^2}{|1-R((1-t)z)|^2-t^2}\bigg)^\alpha.$$ 

Since $|R((1-t)z)| \le 1-t$, we have $$\sup_{z \in \D}F(z) \le \sup_{|x| \le 1-t}G(x),$$ where $$G(x) = (1-x)^{2\alpha-1}\bigg(\frac{1-|\frac{x}{1-t}|^2}{(1-x)^2-t^2}\bigg)^\alpha$$ is a differentiable function in $[t-1,1-t)$ with $\lim_{x \to 1-t}G(x) = \frac{t^{\alpha-1}}{(1-t)^\alpha}.$ Assuming that $x \ne 1-t$, we may write $$G(x) = (1-x)^{2\alpha-1}\bigg(\frac{1-t+x}{(1-t)^2(1+t-x)}\bigg)^\alpha.$$ Denoting its continuous extension to $[t-1,1-t]$ still by $G$ we have 
$$G(x) = (1-x)^{2\alpha-1}\bigg(\frac{1-t+x}{(1-t)^2(1+t-x)}\bigg)^\alpha$$ that is a differentiable function in $[t-1,1-t]$ and $G(1-t) = \frac{t^{\alpha-1}}{(1-t)^\alpha}$. We obtain \[
\begin{split}
G'(x) &=\alpha(1-x)^{2\alpha-1}\bigg(\frac{1-t+x}{(1-t)^2(1+t-x)^2}+\frac{1}{(1-t)^2(1+t-x)}\bigg)\bigg(\frac{1-t+x}{(1-t)^2(1+t-x)}\bigg)^{\alpha-1}
\\
&-(2\alpha-1)(1-x)^{2(\alpha-1)}\bigg(\frac{1-t+x}{(1-t)^2(1+t-x)}\bigg)^\alpha. 
\end{split}
\]
For $x \in (t-1,1-t]$, we may write $$G'(x) =\frac{(1-x)^{2(\alpha-1)}\bigg(\frac{1-t+x}{(t-1)^2(1+t-x)}\bigg)^\alpha((1-2\alpha)x^2+(4\alpha t-2t+2\alpha)x+(1-2\alpha)t^2-1)}{(t-x)^2-1}.$$ Since $x=t-1$ corresponds to the point $z=-1$ and $F(-1)=G(t-1) = 0$, we may omit it. The zeros of $G$ in $(t-1,1-t]$ are the zeros of a quadratic polynomial 
$$p(x)=(1-2\alpha)x^2+(4\alpha t-2t+2\alpha)x+(1-2\alpha)t^2-1.$$ From $p(x)=0,$ we get $$x_0 = \frac{\alpha + 2\alpha t-t-\sqrt{4\alpha^2t-2\alpha t+\alpha^2-2\alpha+1}}{2\alpha-1},$$ since the other root of $p(x)$ is located outside the interval $(t-1,1-t]$ and therefore it can be discarded. Looking at the sign of $G'(x)$ near $x_0,$ we observe that $G$ has a global maximum at $x_0>0$  whenever $x_0 \in [t-1,1-t]$. Let us first confirm that $x_0 > 0.$ Now \[
\begin{split}
(\alpha + 2\alpha t-t)^2 &= \alpha^2+4\alpha^2t + 4\alpha^2t^2-2t\alpha-4\alpha t^2+t^2 = \alpha^2+4\alpha^2t-2t\alpha+(2\alpha-1)^2t^2 
\\
&>  \alpha^2+4\alpha^2t-2t\alpha +1-2\alpha,
\end{split}
\]
since $1-2\alpha < 0.$ Hence $$x_0 = \frac{\alpha + 2\alpha t-t-\sqrt{4\alpha^2t-2\alpha t+\alpha^2-2\alpha+1}}{2\alpha-1} > \frac{\alpha + 2\alpha t-t-\sqrt{(\alpha+2\alpha t-t)^2}}{2\alpha-1}=0.$$

Let us next investigate when $x_0 \le 1-t.$ Now for $1/2 < \alpha < 1$, we have

\[
\begin{split}
x_0 &\le 1-t  \\
\iff \alpha + 2\alpha t-t-\sqrt{4\alpha^2t-2\alpha t+\alpha^2-2\alpha+1} &\le (2\alpha-1)(1-t) \\
\iff (4\alpha-2)t + 1 - \alpha &\le \sqrt{4\alpha^2t-2\alpha t+\alpha^2-2\alpha+1} \\
\iff (8\alpha^2-8\alpha+2)t^2+(7\alpha-6\alpha^2-2)t &\le 0 \\
\iff 8(\alpha-1/2)^2 t &\le 6\alpha^2-7\alpha+2 = 6(\alpha-1/2)(\alpha-2/3) \\
\iff t &\le \frac{3\alpha - 2}{4\alpha - 2}.
\end{split}
\]
Hence we have the following cases. 

\textbf{(1)} If $1/2 < \alpha \le 2/3,$ then $x_0 \le 1-t$ if and only if $t \le  \frac{3\alpha - 2}{4\alpha - 2} < 0$. Since $0 < t < 1,$ it holds that $x_0 > 1-t$ and $\|T_t\|_{H^\infty_\alpha \to H^\infty_\alpha} = \sup_{z \in \D}F(z) = \lim_{x\to 1}F(x) = G(1-t)=\frac{t^{\alpha-1}}{(1-t)^\alpha}$.

\textbf{(2)} If $2/3 < \alpha < 1,$ then $x_0 \le 1-t$ if and only if   $0 < t \le  \frac{3\alpha - 2}{4\alpha - 2}$. In this case, $$\|T_t\|_{H^\infty_\alpha \to H^\infty_\alpha} = F(x_0/(1-t))=G(x_0)=(1-x_0)^{2\alpha-1}\bigg(\frac{1-|\frac{x_0}{1-t}|^2}{(1-x_0)^2-t^2}\bigg)^\alpha.$$ Otherwise, we have $x_0 > 1-t$ if and only if $ \frac{3\alpha - 2}{4\alpha - 2} < t < 1$ and then $\|T_t\|_{H^\infty_\alpha \to H^\infty_\alpha} = G(1-t) = \frac{t^{\alpha-1}}{(1-t)^\alpha}$.

Now in combination with the case $0 < \alpha \le 1/2$, we have the following result. 

$$\|T_t\|_{H^\infty_\alpha \to H^\infty_\alpha} = \begin{cases}
\frac{t^{\alpha-1}}{(1-t)^\alpha} \textup{ if } 0 < \alpha \le 2/3 \textup{ and } 0 < t < 1 \textup{ or if } 2/3 < \alpha < 1 \textup{ and } \frac{3\alpha - 2}{4\alpha - 2} \le t < 1 \\
(1-x_0)^{2\alpha-1}\bigg(\frac{1-|\frac{x_0}{1-t}|^2}{(1-x_0)^2-t^2}\bigg)^\alpha 
\textup{ if } 2/3 < \alpha < 1 \textup{ and } 0 < t < \frac{3\alpha - 2}{4\alpha - 2},
\end{cases}
$$ 
where $$x_0 = \frac{\alpha + 2\alpha t-t-\sqrt{4\alpha^2t-2\alpha t+\alpha^2-2\alpha+1}}{2\alpha-1}.$$

\end{proof}

\begin{thm}
\label{upperbdd}

Let $0 < \alpha \le 2/3$ and $\mathcal{H}\colon H^\infty_\alpha \to H^\infty_\alpha$ be the Hilbert matrix operator. Then $$\|\mathcal{H}\|_{H^\infty_\alpha \to H^\infty_\alpha} = \frac{\pi}{\sin(\alpha \pi)}.$$ For $2/3 < \alpha < 1$, we have the following upper bound \[
\begin{split}
\|\mathcal{H}\|_{H^\infty_\alpha \to H^\infty_\alpha} \le \int_0^{\frac{3\alpha-2}{4\alpha-2}}G(x_0) \, dt + \int_{\frac{3\alpha-2}{4\alpha-2}}^1 t^{\alpha - 1}(1-t)^{-\alpha} \, dt,
\end{split}
\]
where $$G(x)= (1-x)^{2\alpha-1}\bigg(\frac{1-|\frac{x}{1-t}|^2}{(1-x)^2-t^2}\bigg)^\alpha$$ and $$x_0 = \frac{\alpha + 2\alpha t-t-\sqrt{4\alpha^2t-2\alpha t+\alpha^2-2\alpha+1}}{2\alpha-1}.$$ 
\end{thm}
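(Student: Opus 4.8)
The plan is to combine the integral representation \eqref{eq: weighcomp} with the exact norm formula for $T_t$ from Lemma \ref{normT_t}. The central inequality I would establish first is the operator-valued Minkowski estimate
$$\|\mathcal{H}\|_{H^\infty_\alpha \to H^\infty_\alpha} \le \int_0^1 \|T_t\|_{H^\infty_\alpha \to H^\infty_\alpha}\, dt.$$
Once this is available, both assertions follow by inserting the two-branch formula of Lemma \ref{normT_t} and evaluating the resulting integral; no further operator theory is needed.

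To prove the Minkowski estimate I would fix $f \in H^\infty_\alpha$ with $\|f\|_{H^\infty_\alpha} \le 1$ and a point $z \in \mathbb{D}$. Applying the pointwise triangle inequality to \eqref{eq: weighcomp} gives $|\mathcal{H}(f)(z)| \le \int_0^1 |T_t(f)(z)|\,dt$. Multiplying by $(1-|z|^2)^\alpha$ and using, for each fixed $t$, the bound
$$|T_t(f)(z)|\,(1-|z|^2)^\alpha \le \|T_t(f)\|_{H^\infty_\alpha} \le \|T_t\|_{H^\infty_\alpha \to H^\infty_\alpha},$$
one obtains $|\mathcal{H}(f)(z)|(1-|z|^2)^\alpha \le \int_0^1 \|T_t\|_{H^\infty_\alpha \to H^\infty_\alpha}\,dt$. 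Since every integrand here is nonnegative, the passage is just monotone integration of a pointwise inequality and requires no interchange-of-limits argument. Taking the supremum over $z \in \mathbb{D}$ and then over the unit ball of $H^\infty_\alpha$ yields the claimed estimate.

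For $0 < \alpha \le 2/3$, Lemma \ref{normT_t} gives $\|T_t\|_{H^\infty_\alpha \to H^\infty_\alpha} = t^{\alpha-1}(1-t)^{-\alpha}$ for every $t \in (0,1)$, so
$$\int_0^1 \|T_t\|_{H^\infty_\alpha \to H^\infty_\alpha}\,dt = \int_0^1 t^{\alpha-1}(1-t)^{-\alpha}\,dt = B(\alpha,1-\alpha) = \Gamma(\alpha)\Gamma(1-\alpha) = \frac{\pi}{\sin(\alpha\pi)}.$$
This matches the lower bound of Theorem \ref{lowerbdd}, so the two inequalities combine to give the stated equality. For $2/3 < \alpha < 1$ the norm formula splits at $t = \frac{3\alpha-2}{4\alpha-2}$: on $\big(0,\frac{3\alpha-2}{4\alpha-2}\big)$ one has $\|T_t\| = G(x_0)$, while on $\big(\frac{3\alpha-2}{4\alpha-2},1\big)$ one has $\|T_t\| = t^{\alpha-1}(1-t)^{-\alpha}$. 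Substituting these two expressions into the Minkowski estimate reproduces the claimed two-integral upper bound verbatim.

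I expect the only genuine subtlety to be bookkeeping rather than a real obstacle: one should confirm that $t \mapsto \|T_t\|_{H^\infty_\alpha \to H^\infty_\alpha}$ is integrable, so that the bound is finite and meaningful. Near $t=1$ the norm behaves like $(1-t)^{-\alpha}$ and near $t=0$ like $t^{\alpha-1}$, both integrable for $0<\alpha<1$; in the range $0<\alpha\le 2/3$ integrability is already witnessed by the convergent Beta integral above. The evaluation $B(\alpha,1-\alpha)=\pi/\sin(\alpha\pi)$ via the reflection formula, and the recognition of the split integral as the asserted upper bound, are then entirely routine.
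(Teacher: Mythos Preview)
Your proposal is correct and follows essentially the same approach as the paper: establish the Minkowski-type bound $\|\mathcal{H}\| \le \int_0^1 \|T_t\|\,dt$ via the pointwise triangle inequality in \eqref{eq: weighcomp}, insert the two-branch formula from Lemma \ref{normT_t}, evaluate the Beta integral for $0<\alpha\le 2/3$, and combine with Theorem \ref{lowerbdd}. Your added remarks on integrability of $t\mapsto\|T_t\|$ are a nice touch but not a genuine departure from the paper's argument.
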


\begin{proof}
Let $0 < \alpha \le 2/3$. Then by Lemma \ref{normT_t} we have for $f \in H^\infty_\alpha$ with $\|f\|_{H^\infty_\alpha} = 1$ that 
\[
\begin{split}
\|\mathcal{H}f\|_{H^\infty_\alpha} &= \sup_{z \in \D} |\int_0^1 T_t f (z)\, dt (1-|z|^2)^\alpha| \le \int_0^1 \sup_{z \in \D} |T_t f (z)|  (1-|z|^2)^\alpha \, dt 
\\
&\le \int_0^1 \|T_t\|_{H^\infty_\alpha \to H^\infty_\alpha} \, dt = \int_0^1 t^{\alpha - 1}(1-t)^{-\alpha} \, dt = B(\alpha, 1-\alpha) = \frac{\pi}{\sin(\alpha \pi)}. 
\end{split}
\]
Thus in view of the lower bound (Theorem \ref{lowerbdd}) we have established $$\|\mathcal{H}\|_{H^\infty_\alpha \to H^\infty_\alpha} = \frac{\pi}{\sin(\alpha \pi)}$$ for $0 < \alpha \le 2/3$. In the case $2/3 < \alpha < 1,$ we obtain the following upper bound: 
\[
\begin{split}
\|\mathcal{H}\|_{H^\infty_\alpha \to H^\infty_\alpha} &\le  \int_0^1 \|T_t\|_{H^\infty_\alpha \to H^\infty_\alpha} \, dt 
\\
&=\int_0^{\frac{3\alpha-2}{4\alpha-2}}G(x_0) \, dt + \int_{\frac{3\alpha-2}{4\alpha-2}}^1 t^{\alpha - 1}(1-t)^{-\alpha} \, dt,
\end{split}
\]
where $$G(x)= (1-x)^{2\alpha-1}\bigg(\frac{1-|\frac{x}{1-t}|^2}{(1-x)^2-t^2}\bigg)^\alpha$$ and $$x_0 = \frac{\alpha + 2\alpha t-t-\sqrt{4\alpha^2t-2\alpha t+\alpha^2-2\alpha+1}}{2\alpha-1}$$ are from Lemma \ref{normT_t}.
\end{proof}

\end{document}